\newtheorem{theorem}{Theorem}
\newtheorem{lemma}{Lemma}
\newtheorem*{lemma*}{Lemma}
\newtheorem*{coro*}{Corollary}
\theoremstyle{definition}
\newtheorem*{defn*}{Definition}
\newtheorem*{res*}{Theorem}
\numberwithin{equation}{section}
\lstdefinestyle{mystyle}{
    basicstyle=\ttfamily\tiny,
    breakatwhitespace=false,
    breaklines=true,
}
\begin{document}

	\title{Erdős Conjecture and $AR$-Labeling}

 \date{}

\author{Arun J Manattu\footnote{E-mail: arunjmanattu@gmail.com} \ and
Aparna Lakshmanan S\footnote{E-mail: aparnals@cusat.ac.in, aparnaren@gmail.com}\\ Department of Mathematics\\
	Cochin University of Science and Technology\\Cochin - 682022, Kerala, India}
\maketitle
\begin{abstract}
 Given an edge labeling $f$ of a graph $G$, a vertex $v$ is called an $AR$-vertex, if $v$ has distinct edge weight sums for each distinct subset of edges incident on $v$. An injective edge labeling $f$ of a graph $G$ is called an $AR$-labeling of $G$, if $f:E(G) \rightarrow \mathbb{N}$ is such that every vertex in $G$ is an $AR$-vertex under $f$. The minimum $k$ such that there exists an $AR$-labeling $f:E\rightarrow \{1,2,3,\dots,k\}$ is called the $AR$-index of G, denoted by $ARI(G)$. In this paper, using a sequence originating from Erdős subset sum conjecture, a lower bound has been obtained for the $AR$-index of a graph and this bound is used to prove that only finitely many bistars, complete graphs and complete bipartite graphs are $AR$-graphs. The exact values of $AR$-index is obtained for stars and wheels. 
                        
\noindent\line(1,0){395}\\
\noindent{\bf Keywords:} $AR$-labeling, $AR$-index, Erdős subset sum conjecture, $ES$-sequence

\noindent{\bf AMS Subject Classification:} Primary: 05C78, Secondary: 05C55\\
\noindent\line(1,0){395}
\end{abstract}
    
\section{Introduction}
The Ramsey Theory is a branch of combinatorics that argues - in some sense philosophically - that absolute chaos is impossible in any system. The Schur's theorem as well as its generalization - the Rado's theorem \cite{Rob} were both precursors of Ramsey theory having the same essence. Motivated from Rado's partition regularity condition, an edge labeling of graphs called $AR$-labeling was introduced in \cite{Apa}. Given an edge labeled graph, a vertex $v$ is called an $AR$-vertex, if $v$ has distinct edge weight sums for each distinct subset of edges incident on $v$. i.e., if $\{x_1,x_2,\dots,x_k\}$ are the labels assigned to the edges incident on $v$, then the $2^k$ subset sums are all distinct. An injective edge labeling $f$ of a graph $G$ is said to be an $AR$-labeling of $G$ if $f:E(G) \rightarrow \mathbb{N}$ is such that every vertex in $G$ is an $AR$-vertex under $f$.  A graph $G$ is said to be an $AR$-graph, if there exists an $AR$-labeling $f:E \rightarrow \{1,2,3,\dots,m\}$, where $m$ denotes the number of edges in $G$. (We have used \cite{1} for graph theoretic terminology.) \par
The $AR$-labeling of a graph $G$ makes every subset of edges incident on each vertex unique up to the point of identifying any given subset of adjacent edges using an ordered pair $(u, k)$, where $u$ is a vertex in $G$ and $k \in \mathbb{N}$. If there is a connected graph representing a communication network with vertices having only local knowledge, $AR$-labeling provides the initiator of communications (server) with distinct commands for each distinct communication that could be translated only by those vertices receiving the command. If the server is assigning labels to the edges, the command, even though hacked, cannot be decrypted unless and until the individual vertices are compromised. So, $AR$-labeling has potential applications in security networks as well as defense systems.\par
In an information-theoretic interpretation  \cite{2}, namely in a setting of signaling over a channel with multiple access, we can identify the integers as pulse amplitudes that $n$ transmitters could transmit over an additive channel sending one bit of information each, for signaling the base station that they need to start a communication session. The requirement that all subset sums are distinct corresponds to the preference that the base station is able to deduce any possible collection of active users among the whole set.\par
Though $AR$-labeling has been introduced as an edge labeling, we can identify it as a vertex labeling with some restrictions. Let $S(\mathbb{N})$ denotes all possible subsets of natural numbers with distinct subset sums. A vertex labeling $g$ of $G$ is said to be an $AR$-labeling, if $g:V(G) \rightarrow S(\mathbb{N})$ is such that the images of every pair of adjacent vertices have exactly one element in common and any $n \in \mathbb{N}$ either do not appear in any set in the image or appear exactly twice in the sets in the image. A graph is said to be an $AR$-graph if there exists an $AR$-labeling $g:V(G) \rightarrow S(\{1,2,\dots,m(G)\})$ where $S(\{1,2,\dots,m(G)\})$ denotes all subsets of the set of first $m$ natural numbers having distinct subset sums. In fact, every edge labeling problem can be formulated as an equivalent vertex labeling problem. In the figures, we have used vertex labeling to represent the $AR$-labeling, since it helps the readers to identify the label assigned to each edge more clearly. 

\section{Erdős Subset Sum Conjecture and $ES$-sequence}

Let $\{a_1, a_2, \dots , a_n\}$ be a set of positive integers with $a_1 < a_2 <\dots < a_n$ such that all $2^n$ subset sums are distinct. A famous conjecture by Paul Erdős in 1931 states that $a_n > c\cdot 2^n$, for some constant c. Since the sequence arising from this conjecture is repeatedly used in this paper, we call it as the $ES$-sequence. The $n^{th}$ element of $ES$-sequence, $ES(n)$ denotes the smallest integer $m$ such that there exists a set of $n$ natural numbers $\{a_1, a_2, \dots, a_{n-1}, a_n = m\}, a_i < a_j$, for every $i < j$, such that all $2^n$ subset sums are distinct. This sequence appears in The On-Line Encyclopedia of Integer Sequences (OEIS) numbered A276661. Only the first nine numbers of this sequence are known and they are $1,2,4,7,13,24,44,84$ and $161$.\par
In 1955, using the second moment method \cite{Alo}, Erdős and Moser  \cite{3}
proved that $ES(n) \geq 2^n / (4\sqrt{n})$. No advances have been made so far in removing the term $(1/\sqrt{n})$ from this lower bound, but there have been several improvements on the constant factor including the work of Dubroff, Fox and Hu \cite{4}, Guy \cite{5}, Elkies \cite{6}, Bae \cite{7}, Aliev \cite{8} and Steinerberger \cite{Ste} while the best result known to date is still of the form $ES(n) > c \cdot 2^n / \sqrt{n}$. \par
In 1967, John Conway and Richard Guy \cite{9}
constructed a sequence of sets of integers which is now referred to as the Conway-Guy sequence. They showed that the first 40 sets of the Conway-Guy sequence have distinct subset sums and conjectured that all sets arising from their construction have distinct subset sums and are close to the best possible (with respect to the largest element). The first non-trivial upper bound of $ES(n)$ was hence given to be $2^{n-2}$ (for sufficiently large $n$). The $21^{st}$ set in the Conway-Guy sequence has largest element less than $2^{19}$. This gives the bound of $2^{n-2}$ for all $n > 21$ since from a given set of $n$ elements having distinct subset sums, we could construct a set of $(n+1)$ elements having distinct subset sums by doubling the $n$ elements of the first set and introducing an odd number to that set as the $(n+1)^{th}$ element.\par
In 1988, Fred Lunnon \cite{Lun} conducted an extensive computational investigation of
this problem and he determined that $ES(n)$ is given by the Conway-Guy sequence, for $n \leq 8$ and verified that the Conway-Guy sequence has distinct subset sums, for $n \leq 79$. Lunnon also gave a set of 67 integers which surpassed the improvements offered by Conway-Guy sequence in terms of the upper bound for $ES$-sequence.
In 1996, Tom Bohman proved that all sets arising from the Conway-Guy sequence have distinct subset sums \cite{10}. Bohman also gave an improvement to the upper bound of $ES$-sequence by introducing microscopic variations to the construction of Lunnon \cite{11}.
\section{AR-Index of Graphs}
Though there are infinitely many non $AR$-graphs, a theorem in the concluding remarks of \cite{Apa} states that given an arbitrary graph $G$, an $AR$-labeling of $G$ always exists, though the image set contains numbers greater than $m(G)$. An immediate question would be to find the smallest possible $k$ such that an $AR$-labeling exists from the set of edges to the first $k$ natural numbers and so was $AR$-index defined in \cite{Apa}. 
\begin{defn*}[$AR$-Index of G]
    The minimum $k$ such that there exists an $AR$-labeling $f:E\rightarrow \{1,2,3,\dots,k\}$ is called the $AR$-index of G, denoted by $ARI(G)$.
\end{defn*}
 The $AR$-index of a graph $G$ measures how close a graph is towards being an $AR$-graph and evidently $G$ is an $AR$-graph if $ARI(G) = m(G)$. Similarly, we can consider a graph as an almost $AR$-graph if $ARI(G) = m(G) + 1$. The $ES$-sequence that follows from the Erdős subset sum conjecture gives the following bounds to the $AR$-Index of a graph.

\begin{theorem} \label{Delta}
    For any graph $G$,  $ES(\Delta(G)) \leq ARI (G) \leq ES(m(G))$, where $\Delta(G)$ is the maximum degree of a vertex in $G$.
\end{theorem}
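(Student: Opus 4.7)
My plan is to handle the two inequalities separately, as they are essentially independent and both should follow fairly directly from the definition of the $ES$-sequence.

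For the lower bound $ES(\Delta(G)) \leq ARI(G)$, I would argue from a vertex of maximum degree. Let $v \in V(G)$ be a vertex with $\deg(v) = \Delta(G)$, and suppose $f$ is an $AR$-labeling realizing $ARI(G)$, with image in $\{1,2,\dots,k\}$ where $k = ARI(G)$. Let $\{x_1,\dots,x_{\Delta(G)}\}$ be the labels on the edges incident to $v$. Since $v$ is an $AR$-vertex under $f$, all $2^{\Delta(G)}$ subset sums of $\{x_1,\dots,x_{\Delta(G)}\}$ are distinct, and because $f$ is injective the $x_i$ themselves are distinct positive integers. Therefore $\{x_1,\dots,x_{\Delta(G)}\}$ is exactly the sort of set whose maximum element the $ES$-sequence minimizes, so $\max_i x_i \geq ES(\Delta(G))$. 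Since $\max_i x_i \leq k = ARI(G)$, the lower bound follows.

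For the upper bound $ARI(G) \leq ES(m(G))$, I would build an $AR$-labeling explicitly. Fix a set $A = \{a_1 < a_2 < \dots < a_{m(G)}\} \subset \mathbb{N}$ with $a_{m(G)} = ES(m(G))$ and all $2^{m(G)}$ subset sums distinct; such a set exists by definition of $ES(m(G))$. Now let $f$ be any bijection from $E(G)$ to $A$. This $f$ is injective with image in $\{1,2,\dots,ES(m(G))\}$. For any vertex $v$, the labels on edges incident to $v$ form a subset of $A$, and any two distinct subsets of these edge-labels are also distinct subsets of $A$, hence have distinct sums. Thus every vertex is an $AR$-vertex under $f$, making $f$ a valid $AR$-labeling and giving $ARI(G) \leq ES(m(G))$.

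Both directions are essentially bookkeeping once one parses the definitions carefully, so I do not anticipate a genuine obstacle. The only subtle point worth stating explicitly is that in the lower-bound argument we must use the injectivity of $f$ to ensure the incident labels $x_1,\dots,x_{\Delta(G)}$ are actually distinct (so that the $ES$-sequence lower bound applies to the set of incident labels). Once that is noted, the proof reduces to invoking the definition of $ES(n)$ twice, once as a lower bound on the largest element of a distinct-subset-sum set and once as an explicit construction.
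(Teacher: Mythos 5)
Your proposal is correct and follows essentially the same route as the paper's own proof: the lower bound comes from applying the definition of $ES(\Delta(G))$ to the labels at a maximum-degree vertex, and the upper bound comes from labeling all $m(G)$ edges with a distinct-subset-sum set realizing $ES(m(G))$. Your version is simply a more carefully spelled-out rendering of the same two observations.
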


\begin{proof}
    The lower bound follows from the fact that there exists some vertex $v \in G$ that has degree $\Delta(G)$ and to make $v$ an $AR$-vertex, we have to use edge labels at least as large as $ES(\Delta)$.\par
    To prove the upper bound,  let $ES(m) = x$. This implies there exists an $m$-element set having maximum element $x$ with distinct subset sums. Labeling the edges of $G$ using the numbers in this set yields an $AR$-labeling of $G$. Hence, the result.  
\end{proof}

\begin{coro*}
    The $AR$-index of a star, $ARI(K_{1,n}) = ES(n)$. Moreover, a star is not an $AR$-graph, for $n > 2$.
\end{coro*}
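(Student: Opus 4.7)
The plan is to read off both the equality and the ``not-an-$AR$-graph'' statement directly from Theorem \ref{Delta} applied to $G = K_{1,n}$, since for a star the maximum degree and the number of edges coincide.

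First I would observe that $K_{1,n}$ has exactly $n$ edges and a central vertex of degree $n$, so $\Delta(K_{1,n}) = m(K_{1,n}) = n$. Substituting into the two bounds of Theorem \ref{Delta} gives
\[
ES(n) \;=\; ES(\Delta(K_{1,n})) \;\leq\; ARI(K_{1,n}) \;\leq\; ES(m(K_{1,n})) \;=\; ES(n),
\]
forcing $ARI(K_{1,n}) = ES(n)$. (One can also describe the explicit labeling witnessing the upper bound: fix a set $\{a_1<\dots<a_n\}$ with distinct subset sums and largest element $ES(n)$, then assign these $n$ values bijectively to the $n$ edges at the central vertex. Each leaf is trivially an $AR$-vertex, and the centre is an $AR$-vertex by the distinct-subset-sum property.)

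For the second assertion, by definition $K_{1,n}$ is an $AR$-graph precisely when $ARI(K_{1,n}) = m(K_{1,n}) = n$, so after the first part this reduces to deciding for which $n$ one has $ES(n) = n$. Using the known initial values of the $ES$-sequence quoted in Section 2, namely $1,2,4,7,13,24,\dots$, equality holds for $n=1$ and $n=2$, while $ES(3)=4>3$. For $n\geq 3$ the Erdős--Moser bound $ES(n) \geq 2^n/(4\sqrt{n})$ recorded in the same section already exceeds $n$, and in any case the $ES$-sequence is strictly increasing and jumps by more than one from index $2$ onward, so $ES(n) > n$ for every $n > 2$. Therefore $K_{1,n}$ fails to be an $AR$-graph whenever $n > 2$.

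There is essentially no obstacle here: the whole content is packaged inside Theorem \ref{Delta} together with the numerical fact that $ES$ grows faster than linearly. The only point worth being a little careful about is justifying $ES(n) > n$ uniformly for $n > 2$, which I would handle by combining the tabulated values for $3 \leq n \leq 9$ with the Erdős--Moser lower bound for $n \geq 10$.
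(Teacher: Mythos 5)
Your argument is correct and is essentially identical to the paper's proof: both derive $ARI(K_{1,n}) = ES(n)$ by noting $\Delta(K_{1,n}) = m(K_{1,n}) = n$ so the two bounds of Theorem \ref{Delta} coincide, and both conclude non-$AR$-ness from $ES(n) > n$ for $n > 2$. Your only addition is an explicit justification of $ES(n) > n$ (tabulated values plus the Erd\H{o}s--Moser bound), which the paper simply asserts.
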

\begin{proof}
    Since $\Delta(K_{1,n}) = m(K_{1,n}) = n$, the lower and upper bounds in Theorem \ref{Delta} coincide and hence $ARI(K_{1,n}) = ES(n)$. Also, $ES(n) > n$, for $n > 2$ and hence $K_{1,n}$ is not an $AR$-graph, for $n > 2$.
\end{proof}

\begin{theorem}
    Every graph $G$ can be identified as an induced subgraph of some $AR$-graph.
    \end{theorem}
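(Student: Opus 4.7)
The plan is to embed $G$ into an $AR$-graph $H$ by adjoining enough disjoint copies of $K_2$ to absorb the gap between $m(G)$ and the size of a valid label set with distinct subset sums. First, appealing to the upper bound in Theorem \ref{Delta}, I would fix an $m$-element set $L \subseteq \{1, 2, \ldots, ES(m)\}$ (with $m := m(G)$) whose $2^m$ subset sums are all distinct and whose largest element is $ES(m)$; such an $L$ exists by the very definition of the $ES$-sequence. Labeling the edges of $G$ injectively with the elements of $L$ yields an $AR$-labeling $f_0$ of $G$, since for any vertex $v$ the edges incident on $v$ inherit labels drawn from the subset-sum-distinct set $L$.

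Next, I would build $H$ as the disjoint union of $G$ with $ES(m) - m$ copies of $K_2$ on fresh vertices, so that $m(H) = ES(m)$. I would then extend $f_0$ to a bijection $f \colon E(H) \to \{1, 2, \ldots, m(H)\}$ by assigning the unused labels $\{1, 2, \ldots, ES(m)\} \setminus L$, in any order, to the adjoined edges. At each vertex of $G$ the multiset of incident-edge labels is unchanged, so the $AR$-property carries over from $f_0$; at each newly added vertex the degree is $1$, so the $AR$-condition is vacuous. Since the adjoined edges touch only new vertices, no edge of $H$ outside $E(G)$ has both endpoints in $V(G)$, and therefore $G$ sits inside $H$ as an induced subgraph.

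The construction is essentially a repackaging of the upper bound in Theorem \ref{Delta}, so no serious obstacle arises. The one point to watch is to insist that each adjoined $K_2$ uses fresh vertices, pairwise disjoint from each other and from $V(G)$; this simultaneously guarantees the induced-subgraph condition, keeps $H$ simple, and keeps the new vertices at degree one so the $AR$-property there is automatic. If a connected $AR$-supergraph were desired instead, one could replace the disjoint $K_2$'s by a suitable caterpillar attached at a single leaf, but the statement as given needs nothing beyond the disjoint-union construction above.
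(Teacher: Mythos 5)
Your proof is correct, but it takes a genuinely different route from the paper's. The paper keeps the supergraph connected: it first attaches a pendant vertex $v$ to $G$ to form $G'$, takes an $AR$-labeling of $G'$ with labels in $\{1,2,\dots,ARI(G')\}$ (whose existence rests on the quoted result that every graph admits some $AR$-labeling, so that $ARI(G')$ is finite), and then grows a path of $l = ARI(G')-m(G')$ further vertices out of $v$, assigning the $l$ unused labels to the $l$ new edges; every vertex touched by this extension has degree at most two, where the $AR$-condition is automatic for an injective labeling. You instead invoke only the explicit upper-bound construction of Theorem \ref{Delta}: label $E(G)$ by an $m$-element distinct-subset-sum set $L$ with maximum element $ES(m)$, and absorb the leftover labels $\{1,\dots,ES(m)\}\setminus L$ on $ES(m)-m$ disjoint copies of $K_2$, where the degree-one $AR$-check is indeed vacuous. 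Your version is more self-contained --- it needs no appeal to the finiteness of the $AR$-index of an auxiliary graph, only the definition of the $ES$-sequence --- but it pays two prices: the resulting $AR$-graph is disconnected whenever $ES(m)>m$ (i.e.\ for $m\geq 3$), and it has $ES(m)$ edges, exponentially more than the $ARI(G')$ edges of the paper's supergraph. Since the paper's definition of an $AR$-graph imposes no connectivity requirement, this costs you nothing as far as the stated theorem is concerned, and your closing remark about replacing the disjoint edges by a pendant path is precisely the device the paper uses to stay connected.
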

    
\begin{proof}
    If $G$ is itself an $AR$-graph, then there is nothing to prove. Therefore, assume that $G$ is not an $AR$-graph. Let $G'$ be the graph obtained from $G$ by attaching a pendent vertex $v$ to any one of the vertices of $G$. If $G'$ is an $AR$-graph, then we are done. Otherwise, $l = ARI(G') - m(G') > 0$. Attach a path on $l$ vertices to $v$ to get $H$, so that we have $l$ new edges. Label those edges using $ARI(G') - m(G')$ labels that are not used in the edge labeling of $G'$. This is an $AR$-labeling of $H$ using labels from the set $\{1,2,\ldots,m(H)\}$ and hence $H$ is an $AR$-graph for which $G$ is an induced subgraph.
\end{proof}
\begin{coro*}
    The property of being an $AR$-graph is not vertex hereditary and hence $AR$-graphs do not admit forbidden subgraph characterization.
\end{coro*}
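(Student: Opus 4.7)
The plan is to deduce the corollary almost immediately from the preceding theorem by exhibiting a concrete witness to non-heredity, and then invoke the standard equivalence between hereditary properties and forbidden induced subgraph characterizations.

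First, I would recall the corollary to Theorem \ref{Delta}: the star $K_{1,n}$ fails to be an $AR$-graph whenever $n > 2$, because $ES(n) > n$ in that range. So a concrete non-$AR$-graph is readily available, say $K_{1,3}$. Next, I would apply the theorem just proved (every graph embeds as an induced subgraph of some $AR$-graph) to this $K_{1,3}$: there exists an $AR$-graph $H$ containing $K_{1,3}$ as an induced subgraph. Since $K_{1,3}$ is itself not an $AR$-graph, this shows that an induced subgraph of an $AR$-graph need not be an $AR$-graph, i.e., the property of being an $AR$-graph is not preserved under taking induced subgraphs. This establishes the first half of the corollary.

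For the second half, I would cite the well-known fact that a graph class admits a characterization by a family of forbidden induced subgraphs if and only if it is hereditary (closed under induced subgraphs). Since we have just shown the class of $AR$-graphs is not hereditary, no such forbidden subgraph list can exist; if it did, the graph $H$ constructed above would be an $AR$-graph containing the forbidden subgraph $K_{1,3}$ (or some other forbidden induced subgraph lying inside $K_{1,3}$), a contradiction.

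There is essentially no obstacle here: the substantive content was done in the preceding theorem, and the corollary is a one-line logical consequence. The only point that requires a sentence of care is the quantifier: the theorem furnishes, for each non-$AR$-graph, some $AR$-supergraph, and we only need one such instance to break heredity, which $K_{1,3} \subset H$ supplies.
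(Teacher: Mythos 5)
Your proposal is correct and matches the paper's (implicit) argument exactly: the corollary follows immediately from the preceding theorem once one non-$AR$-graph such as $K_{1,3}$ is in hand, together with the standard equivalence between hereditary classes and forbidden induced subgraph characterizations. The paper offers no written proof for this corollary, and your one concrete witness plus the quantifier remark is all that is needed.
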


\begin{theorem}
    A bistar graph $B_{n,n}$ is an $AR$-graph if and only if $n \leq 2$ and $B_{3,3}$ is an almost $AR$-graph.
\end{theorem}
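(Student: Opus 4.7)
The plan is to treat the two directions of the biconditional separately and then establish the almost-$AR$ property of $B_{3,3}$. Throughout I will use that $m(B_{n,n}) = 2n+1$ and $\Delta(B_{n,n}) = n+1$ (each center has $n$ pendant neighbours plus the other center).

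For the forward direction ($n \leq 2$), I will exhibit explicit $AR$-labelings. For $B_{1,1} \cong P_4$, the labels $1,2,3$ along the path work since every vertex has degree at most $2$, so any injective labeling suffices. For $B_{2,2}$, I will label the central edge by $4$, the pendant edges at one center by $\{1,2\}$, and those at the other center by $\{3,5\}$; the incident label sets $\{1,2,4\}$ and $\{3,4,5\}$ are readily verified to be sum-distinct, and the labels used lie in $\{1,\dots,5\} = \{1,\dots,m(B_{2,2})\}$.

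For the non-$AR$ direction ($n \geq 3$) I will split into two sub-cases. When $n \geq 4$, Theorem~\ref{Delta} gives $ARI(B_{n,n}) \geq ES(n+1)$; using the tabulated values $ES(5),\dots,ES(9) = 13, 24, 44, 84, 161$ for $4 \leq n \leq 8$, and the Erdős–Moser lower bound $ES(n+1) \geq 2^{n+1}/(4\sqrt{n+1})$ for $n \geq 9$, we get $ES(n+1) > 2n+1 = m(B_{n,n})$ in every case, so $B_{n,n}$ cannot be an $AR$-graph. The delicate case is $n=3$, where $ES(4) = 7 = m(B_{3,3})$, so Theorem~\ref{Delta} is not by itself conclusive. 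Here I will argue that any $AR$-labeling with labels in $\{1,\dots,7\}$ would force the incident label sets $A$ and $B$ at the two centers to be $4$-subsets of $\{1,\dots,7\}$ each with distinct subset sums, with $|A \cap B| = 1$ (only the central edge is shared), hence $|A \cup B| = 7$. A direct check of the $\binom{7}{4} = 35$ candidate subsets reveals that $\{3,5,6,7\}$ is the unique $4$-subset of $\{1,\dots,7\}$ with all sixteen subset sums distinct (most candidates are eliminated at once by a pair summing to another element of the set). This forces $A = B = \{3,5,6,7\}$, contradicting $|A \cap B| = 1$.

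Finally, to certify that $B_{3,3}$ is almost $AR$, I will exhibit the labeling: central edge $7$, pendants at $u$ labeled $\{3,5,6\}$, pendants at $v$ labeled $\{2,4,8\}$. The incident label sets $\{3,5,6,7\}$ and $\{2,4,7,8\}$ each produce sixteen distinct subset sums, so this is an $AR$-labeling with range in $\{1,\dots,8\}$, giving $ARI(B_{3,3}) \leq 8$; combined with the preceding step $ARI(B_{3,3}) > 7$, equality holds and $ARI(B_{3,3}) = m(B_{3,3})+1$. The principal obstacle is the uniqueness claim in the $n=3$ sub-case, since without it the borderline $ES(n+1) = m(B_{n,n})$ scenario is not ruled out by the generic bound; I expect to dispatch it by a brief enumeration, separating the subsets of $\{1,\dots,7\}$ containing $1$ (where collisions of the form $1+a=b$ or $1+a+b=c$ are abundant) from those with $\min \geq 2$ (a shorter list, each checked directly).
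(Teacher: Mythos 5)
Your proposal is correct and follows essentially the same route as the paper: the $ES(\Delta)$ lower bound from Theorem \ref{Delta} disposes of $n\geq 4$, the uniqueness of $\{3,5,6,7\}$ among sum-distinct $4$-subsets of $\{1,\dots,7\}$ rules out $B_{3,3}$, and explicit labelings settle $B_{1,1}$, $B_{2,2}$ and the almost-$AR$ status of $B_{3,3}$ (the paper presents these labelings in a figure rather than in text). Your treatment is slightly more explicit in verifying $ES(n+1)>2n+1$ for large $n$ and in justifying the uniqueness enumeration, but no new idea is involved.
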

 
\begin{proof}
\begin{figure}[h]
       \centering
       \includegraphics[scale=0.75]{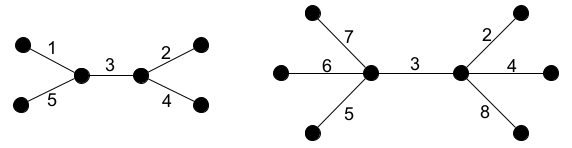}
       \caption{$AR$-Labeling of $B_{2,2}$ and $B_{3,3}$}
       \label{bistar}
   \end{figure}
Since, $\Delta(B_{n,n}) = n+1$, by Theorem \ref{Delta}, we have $ARI(B_{n,n}) \geq ES(n+1)$. Also, 
$B_{n,n}$ has $2n + 1$ edges and $2n+1 < ES(n+1)$, for $n > 3$. Hence, $B_{n,n}$ is not an $AR$-graph for $n > 3$. For $n = 3$, $ES(4) = 7 = m(B_{3,3})$. But, there are two vertices of degree $4$ in $B_{3,3}$ and there is only one $4$-element set $\{3, 5, 6, 7\}$ with distinct subset sums, having maximum element less than or equal to 7. Hence, $B_{3,3}$ is not an $AR$-graph.\par
Now, $B_{1,1}$ is $P_4$ which is trivially an $AR$-graph and the edge labeling in Figure \ref{bistar} shows that $B_{2,2}$ is an $AR$-graph and $ARI(B_{3,3}) = 8 = m(B_{3,3})+1$, which proves that $B_{3,3}$ is an almost $AR$-graph. 
\end{proof}

\begin{theorem}
   The complete graph $K_n$ is an $AR$-graph if and only if $n \leq 5$.
\end{theorem}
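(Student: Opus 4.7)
My plan is to split by $n$ into three regimes: for $n \le 5$ I exhibit $AR$-labelings, for $n \ge 7$ the lower bound $ARI(K_n) \ge ES(n-1)$ from Theorem~\ref{Delta} already exceeds $m(K_n) = \binom{n}{2}$, and for the borderline case $n = 6$ I argue by forbidden structure.

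For $n \le 5$, the cases $n = 2, 3, 4$ are immediate; for $K_5$ one can check that the labeling $e_{12}=1, e_{13}=4, e_{14}=8, e_{15}=2, e_{23}=5, e_{24}=7, e_{25}=9, e_{34}=3, e_{35}=10, e_{45}=6$ makes the five vertex-incidence sets $\{1,2,4,8\}$, $\{1,5,7,9\}$, $\{3,4,5,10\}$, $\{3,6,7,8\}$, $\{2,6,9,10\}$, each of which is a distinct-sum $4$-set of $\{1,\ldots,10\}$. For $n \ge 7$, the inequality $\binom{n}{2} < ES(n-1)$ is witnessed for $n = 7$ by $21 < 24 = ES(6)$; for $n = 8, 9, 10$ by the tabulated values $ES(7) = 44$, $ES(8) = 84$, $ES(9) = 161$; and for $n \ge 11$ by the Erdős--Moser bound $ES(n-1) \ge 2^{n-1}/(4\sqrt{n-1})$, which dominates $n(n-1)/2$ comfortably.

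The crux is $n = 6$, where $\Delta(K_6) = 5$, $ES(5) = 13$, and $m(K_6) = 15$, so Theorem~\ref{Delta} yields no direct numerical obstruction. The first key step is to observe that, since each vertex of $K_6$ has degree $5$ and its set of five incident labels must have all $32$ subset sums distinct, its maximum incident label is at least $ES(5) = 13$. There are only three labels in $\{1, \ldots, 15\}$ meeting this lower bound, and together they cover at most $6$ vertex-endpoints; since $K_6$ has exactly $6$ vertices, the three edges bearing the labels $13, 14, 15$ must form a perfect matching, which I may take to be $\{v_1v_2, v_3v_4, v_5v_6\}$ with the respective labels. In particular, each of $v_1$ and $v_2$ has maximum incident label exactly $13$ and four remaining labels drawn from $\{1,\ldots,12\}$.

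The main obstacle is then to classify the $4$-subsets $T \subseteq \{1,\ldots,12\}$ for which $T \cup \{13\}$ has all $32$ subset sums distinct. This requirement translates to (i) $T$ itself has distinct subset sums, and (ii) no two disjoint subsets of $T$ have sums differing by $13$; the latter subsumes the condition that no subset of $T$ sums to $13$. A systematic case split on the two largest elements of $T$ (showing first that $\{11,12\} \subseteq T$, then eliminating candidates for the remaining pair) shows that precisely two such sets exist: $\{3,6,11,12\}$ and $\{6,9,11,12\}$. Thus each of $v_1, v_2$ must use one of these two $T$'s for its non-$13$ labels. However, both candidate $T$'s contain $\{6, 11, 12\}$, so the four-label sets at $v_1$ and at $v_2$ cannot be disjoint --- yet they must be, since they label the eight distinct edges joining $\{v_1, v_2\}$ with $\{v_3, v_4, v_5, v_6\}$. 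This contradiction rules out an $AR$-labeling of $K_6$ and completes the proof.
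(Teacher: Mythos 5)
Your proof is correct and follows essentially the same route as the paper's: for $n=6$ both argue that the labels $13,14,15$ must form a perfect matching, that the two endpoints of the $13$-edge would each need a sum-distinct $5$-set with maximum element exactly $13$, and that the only two such sets ($\{3,6,11,12,13\}$ and $\{6,9,11,12,13\}$) intersect in too many elements to label disjoint edge sets --- the paper cites Lunnon for this enumeration where you assert (but do not carry out) a hand check. The cases $n\le 5$ and $n\ge 7$ likewise match the paper's treatment (explicit labelings, plus $ES(n-1)>\binom{n}{2}$ via known values and the Erd\H{o}s--Moser bound).
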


\begin{proof}
Note that $K_2$ and $K_3$ are trivially $AR$-graphs, and Figure \ref{Complete Graphs} shows that $K_4$ and $K_5$ are also $AR$-graphs.
\begin{figure}[ht]
    \centering
    \includegraphics[scale=0.65]{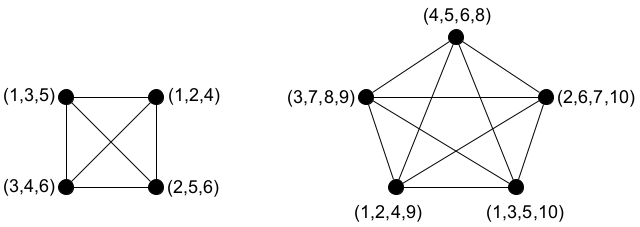}
    \caption{$AR$-Labeling of $K_4$ and $K_5$}
    \label{Complete Graphs}
\end{figure}

Since $K_n$ is $(n-1)$-regular, from Theorem \ref{Delta}, $ARI (K_n ) \geq ES(n-1)$. Also, we have $m(K_n ) = \frac{n^{2} -n}{2}$. For $n$ = 6, $ARI(K_6 ) \geq ES(5) = 13$ and $m(K_6 ) = 15$. To obtain an $AR$-labeling of $K_6$, we need six 5-element sets with each set having distinct subset sums. Since $ES(5) = 13$, each vertex must have an edge of label at least 13 incident on it. If $K_6$ is an $AR$-graph, then the maximum edge label must be 15. Again, since the edge independence number of $K_6$ is 3, three independent edges must be labeled 13, 14 and 15. Therefore, we need two 5-element subsets each with maximum element 13, 14 and 15, respectively, and their pairwise intersection contains only 13, 14 and 15, respectively. But Lunnon showed \cite{Lun} that there exist only two 5-element sets with maximum element 13 having distinct subset sums and their intersection has four elements. Hence, there is no $AR$-labeling of $K_6$ with $ARI(K_6)=15$. Therefore $K_6$ is not an $AR$-graph.\par
Suppose $\{a_1 , a_2, \dots , a_n \}$ is a set of integers having distinct subset sums. Erdős noted \cite{3} that $2^n - 1 \leq nx$, where $x = max\{a_1 , a_2, \dots , a_n \}$. By this inequality, we have $ES(n) \geq \frac{2^n - 1}{n}$, which implies $ES(9) \geq \frac{2^9 - 1}{9}~> 56~>45 = m(K_{10})$. Now, considering the derivatives of $m(K_n)$ and $ES(n)$ with respect to $n$, we can see that $ES(n)$ has an exponential rate of growth compared to $m(K_n)$. So, $ES(n) > m(K_n )$, for all $n \geq 10$. Now, $ES(6) = 24, ES(7) = 44, ES(8) = 84$; $m(K_7 ) = 21, m(K_8 ) = 28, m(K_9 ) = 36$ and $ARI(K_n) > ES(n-1)$. Hence, $K_n$ is not an $AR$-graph, for $n \geq 6$.
\end{proof}

\begin{theorem}
    The only complete bipartite $AR$-graphs are $K_{1,1}, K_{1,2}, K_{2,2}, K_{2,3}, K_{2,4}, K_{3,3},$ $K_{3,4}, K_{4,4}, K_{4,5}$ and $K_{5,5}$.
\end{theorem}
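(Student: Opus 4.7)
The plan is to combine Theorem \ref{Delta} with a short case reduction, followed by explicit constructions for the positive cases and a $K_6$-style elimination for the borderline candidates. Assume without loss of generality $m\le n$, so $\Delta(K_{m,n})=n$ and $m(K_{m,n})=mn$; by Theorem \ref{Delta}, $K_{m,n}$ is an $AR$-graph only if $ES(n)\le mn$. Combining the tabulated values $ES(1)=1,\dots,ES(9)=161$ with the crude bound $ES(n)\ge (2^n-1)/n$, which gives $ES(n)>n^2\ge mn$ for every $n\ge 10$, reduces the problem to a finite list. A direct check isolates exactly fifteen candidate pairs: the ten listed graphs together with the five extras $K_{3,5}$, $K_{4,6}$, $K_{5,6}$, $K_{6,6}$, and $K_{7,7}$.

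For each of the ten listed graphs one must exhibit an $AR$-labeling using $\{1,\dots,mn\}$. The three smallest are trivial; the rest amount to filling an $m\times n$ matrix with a permutation of $\{1,\dots,mn\}$ so that every row and every column has distinct subset sums. These are finite verifications that can be carried out by hand or by modest computer search, and yield the constructions claimed in the theorem.

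For each of the five remaining candidates the strategy mirrors the $K_6$ argument of the previous theorem. The $m$ vertices of the smaller partition contribute $m$ pairwise disjoint $n$-element blocks of labels, each with distinct subset sums, so their maxima are $m$ distinct integers in the interval $[ES(n),mn]$. This crude count already kills two of the five candidates: $K_{4,6}$ needs four distinct values in the singleton $\{24\}$, and $K_{7,7}$ needs seven distinct values in the six-element set $\{44,\dots,49\}$, both impossible. For the three remaining candidates $K_{3,5}$, $K_{5,6}$, $K_{6,6}$ the maxima can fit, so one must invoke Lunnon's enumeration of the $5$- and $6$-element sets realizing (or nearly realizing) $ES(5)$ and $ES(6)$ and use the disjointness of the blocks to force a contradiction in each case.

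The hard part will be these last three sub-cases. In $K_{3,5}$ the three block maxima must be exactly $13,14,15$, so the block of maximum $13$ must coincide with one of the only two $5$-element sets realizing $ES(5)$; one then checks by direct case work on the remaining ten labels that no disjoint pair of blocks with maxima $14$ and $15$ and distinct subset sums can complete the partition. The analogous analyses for $K_{5,6}$ and $K_{6,6}$ rest on Lunnon's list of $6$-element sets with maximum close to $24$, the rapid scarcity of admissible sets as the maximum decreases being the ingredient that closes each case.
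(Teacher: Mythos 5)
Your overall architecture is the same as the paper's: reduce to a finite candidate list via Theorem \ref{Delta} together with the known values of $ES(n)$ and a crude exponential lower bound for large $n$, then settle each candidate by explicit construction or by analysing the $m$ pairwise disjoint $n$-element label blocks attached to the vertices of the smaller side. Within that framework you diverge in two worthwhile ways. First, your candidate list is the correct one: $K_{7,7}$ satisfies $ES(7)=44\le 49=m(K_{7,7})$, so Theorem \ref{Delta} does \emph{not} exclude it, whereas the paper's ``it immediately follows'' silently drops it; your pigeonhole on block maxima (seven distinct maxima needed in the six-element interval $\{44,\dots,49\}$) is exactly the extra argument required, and the same pigeonhole disposes of $K_{4,6}$ (four distinct maxima in $\{24\}$) without any search. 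Second, where you propose hand case-work via Lunnon's enumeration for $K_{3,5}$, $K_{5,6}$ and $K_{6,6}$, the paper instead runs a Python program that enumerates all $n$-element subsets of $\{1,\dots,mn\}$ with distinct subset sums and checks for $m$ disjoint ones; the positive cases are settled by explicit labelings in a figure rather than by an argument.

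The gap in your version is that the finite checks are announced but not executed. For $K_{3,5}$ your sketch is realistic (the maxima must be exactly $13,14,15$, and Lunnon's two sets with maximum $13$ anchor a short case analysis), but for $K_{5,6}$ and $K_{6,6}$ the relevant collection is all $6$-element subsets of $\{1,\dots,30\}$ or $\{1,\dots,36\}$ with distinct subset sums and maximum in $\{24,\dots,30\}$ or $\{24,\dots,36\}$ — far more than a list of sets ``realizing or nearly realizing $ES(6)$'' — so ``direct case work'' by hand is optimistic, which is presumably why the paper delegates these cases to a computer. Likewise the ten positive cases require actual labelings in which \emph{both} sides' vertices are $AR$-vertices (your matrix formulation correctly demands distinct subset sums in every row \emph{and} every column), and these must be exhibited, not merely asserted to exist.
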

\begin{proof}
It follows from Theorem \ref{Delta} that $K_{m,n}$, $m \leq n$, is not an $AR$-graph, if $ES(n) > mn$. Therefore, it immediately follows that the complete bipartite graphs other than $K_{3,5}, K_{4,6}, K_{5,6}, K_{6,6}$ and those listed in the theorem are not $AR$-graphs. The fact that $K_{3,5}, K_{4,6}, K_{5,6}$ and $K_{6,6}$ are not $AR$-graphs can be verified using the following Python program. The program is based on the fact that for $K_{m,n}$ to be an $AR$-graph, we need at least $m$ distinct $n$-element subsets of $\{1,2,\dots,mn\}$ such that each of them have distinct subset sums. The following program identifies all $n$-element subsets of $\{1,2,\dots,mn\}$ such that each of them have distinct subset sums and check whether there exist $m$ disjoint subsets in this collection.

\lstinputlisting[language=Python]{completebipartite.py}
\begin{figure}[ht]
    \centering
    \includegraphics[scale=0.50]{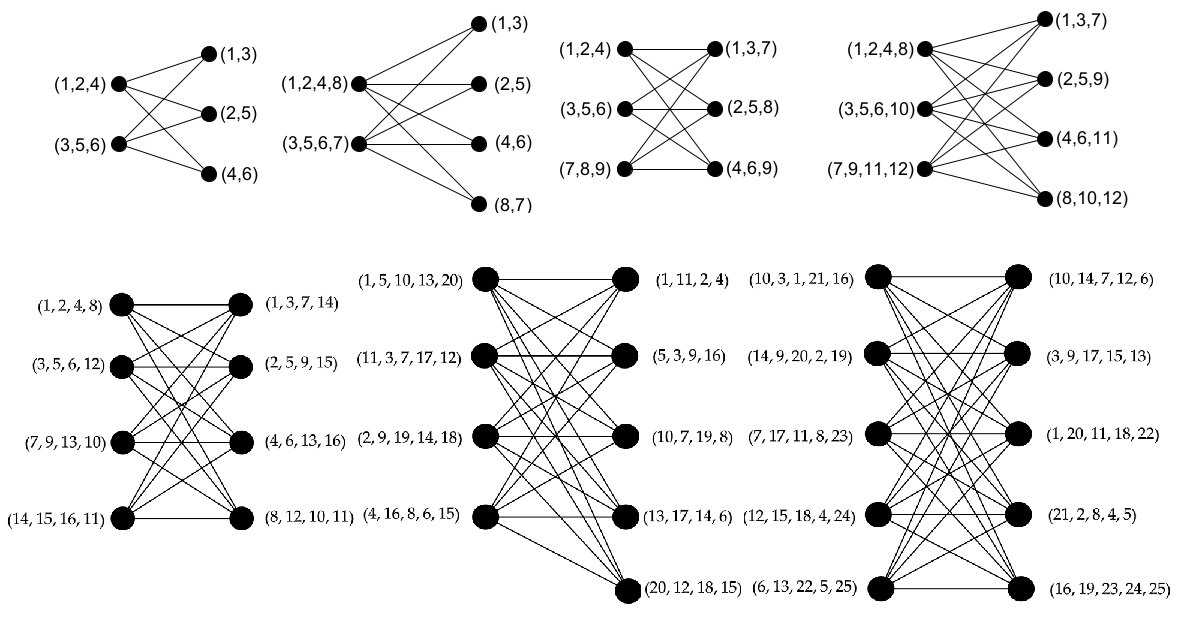}
    \caption{$AR$-Labeling of complete bipartite graphs}
    \label{bipartite}
\end{figure}
\end{proof} 

\begin{theorem}
    The only complete multipartite $AR$-graphs with each partite set having at least two vertices are $K_{2,2,2}$ and $K_{2,2,3}$. 
\end{theorem}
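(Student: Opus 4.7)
The plan is to combine the general lower bound $ARI(G) \ge ES(\Delta(G))$ from Theorem~\ref{Delta} with an ad hoc ``covering by large labels'' argument for a couple of borderline cases that the bound alone cannot kill. Write a candidate complete multipartite graph as $K_{n_1,n_2,\ldots,n_k}$ with $2 \le n_1 \le n_2 \le \cdots \le n_k$ and $k \ge 3$, and set $n = \sum_i n_i$; then $\Delta = n-n_1$ and $m = \binom{n}{2} - \sum_i \binom{n_i}{2}$. Using the tabulated values $ES(4)=7$, $ES(5)=13$, $ES(6)=24$, $ES(7)=44$, $ES(8)=84$ together with Erdős' inequality $ES(n) \ge (2^n-1)/n$ (already used for $K_n$), a routine finite check shows that $ES(\Delta) > m$ in every case except four: $K_{2,2,2}$, $K_{2,2,3}$, $K_{3,3,3}$ and $K_{2,2,2,2}$. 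For instance, among $k=3$ graphs only the chain $K_{2,2,t}$ with $t \le 3$ and the single graph $K_{3,3,3}$ survive, since already $K_{2,3,3}$ satisfies $ES(6)=24 > 21 = m$; among $k=4$ graphs only $K_{2,2,2,2}$ survives, as $K_{2,2,2,3}$ gives $ES(7)=44 > 30$; and $k \ge 5$ is empty because $K_{2,2,2,2,2}$ has $ES(8)=84 > 40$.

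For the two surviving non-examples I would adapt the argument used for $K_6$ in the previous theorem. In $K_{3,3,3}$ every vertex has degree $6$, so an $AR$-labeling with labels in $\{1,\ldots,27\}$ would have to place an edge labelled at least $ES(6)=24$ at each of the $9$ vertices; but only the four labels $24,25,26,27$ qualify, and four edges cover at most $8$ vertices. The graph $K_{2,2,2,2}$ is tighter still: it has $8$ vertices all of degree $6$ and $m = 24 = ES(6)$, so the label $24$ itself would have to appear on an edge incident to each of the $8$ vertices, which is impossible since a single edge covers only $2$ vertices. Both cases therefore yield immediate contradictions.

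It remains to exhibit explicit $AR$-labelings of $K_{2,2,2}$ (with labels in $\{1,\ldots,12\}$) and $K_{2,2,3}$ (with labels in $\{1,\ldots,16\}$), most naturally displayed in a figure. This is the only genuinely constructive step and is where I expect the real work to lie: each of the six vertices of $K_{2,2,2}$ needs a $4$-subset of the label set with distinct subset sums, while $K_{2,2,3}$ requires $5$-subsets at the four degree-$5$ vertices and $4$-subsets at the three degree-$4$ vertices, all glued together consistently with the multipartite sharing structure. A short computer search in the spirit of the Python program used in the complete bipartite theorem should produce suitable labelings, which can then be recorded graphically.
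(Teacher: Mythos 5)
Your proposal is correct and follows essentially the same route as the paper: eliminate almost everything via $ARI(G)\ge ES(\Delta(G))>m(G)$, dispose of $K_{3,3,3}$ by counting that only the four labels $24,\dots,27$ can serve as the required maximum label at each of the nine degree-$6$ vertices, and exhibit explicit labelings of $K_{2,2,2}$ and $K_{2,2,3}$ (the paper does this with a figure, exactly as you anticipate). One point in your favour: the paper asserts that $m(G)<ES(\Delta(G))$ holds for every admissible complete multipartite graph except $K_{2,2,2}$, $K_{2,2,3}$ and $K_{3,3,3}$, but this is false for $K_{2,2,2,2}$, where $\Delta=6$, $m=24$ and $ES(6)=24$ give equality rather than strict inequality, so the bound alone only yields $ARI\ge m$ and does not exclude it. You correctly flag $K_{2,2,2,2}$ as a fourth borderline case and kill it with the observation that all eight degree-$6$ vertices would need an incident edge labelled at least $24$, yet the unique such label sits on a single edge covering only two vertices. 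Your write-up therefore closes a small gap in the published argument; everything else matches the paper.
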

\begin{proof}
   For all complete multipartite graphs $G$ with more than two elements in each partite set, $m(G) < ES(\Delta(G))$, except for $K_{2,2,2}, K_{2,2,3}$ and $K_{3,3,3}$. Figure \ref{Multipartite graphs} shows that $K_{2,2,2}$ and $K_{2,2,3}$ are $AR$-graphs. Now, $\Delta(K_{3,3,3}) = 6$ and $m(K_{3,3,3}) = 27$. Since $ES(6) = 24$,
   on an $AR$-vertex with degree six, the maximum edge label incident should be at least 24. However, there are only four edge labels $x$ with $24\leq x\leq27$, so that at most eight vertices can have maximum edge labels incident on them to be greater than or equal to 24.  But, there are 9 vertices in $K_{3,3,3}$ having degree six. Hence $K_{3,3,3}$ is also not an $AR$-graph.
   \begin{figure}[ht]
    \centering
    \includegraphics[scale=0.365]{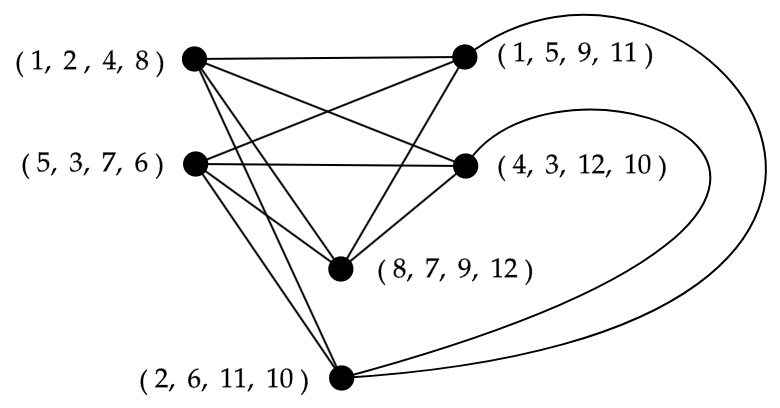}
    \includegraphics[scale=0.362]{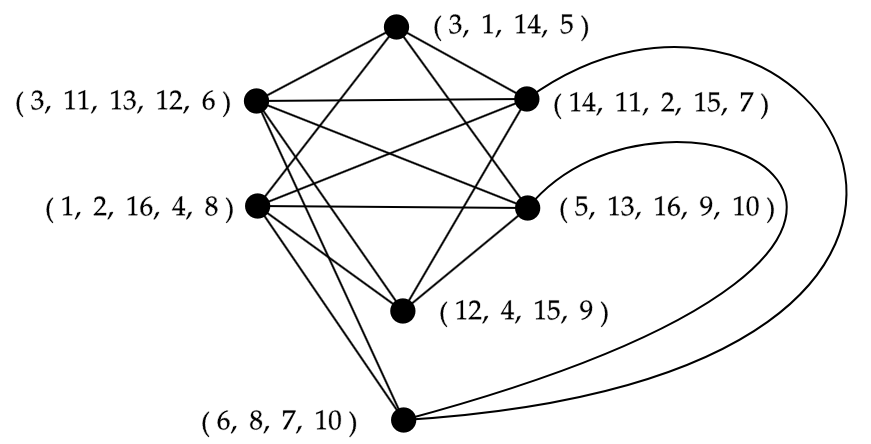}
    \caption{$AR$-Labeling of Multipartite graphs $K_{2,2,2}$ and $K_{2,2,3}$}
    \label{Multipartite graphs}
\end{figure}
\end{proof}    

The following lemma is useful to find the $AR$-index of wheel graphs.

\begin{lemma}\label{3lab} \cite{Apa}
    Given a vertex $v$ in G, if any two edges incident on $v$ are labeled $x$ and $y$, then a third edge can be $z$ if and only if $x + y \neq z$ and $|x - y| \neq z$. Moreover, if the edges incident on a vertex $v$ of degree three are labeled $x$, $y$ and $z$ with $x<y<z$, then $v$ is an AR-vertex if $x + y \neq z$.

\end{lemma}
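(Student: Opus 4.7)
The plan is to unpack the definition of an $AR$-vertex at a vertex of small degree and check which of the subset-sum equalities can or cannot occur.

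First, I would observe that for a vertex $v$ whose incident edges carry labels $x$, $y$, $z$ (all positive integers, automatically distinct by injectivity), there are exactly $2^3=8$ subset sums:
\[
0,\ x,\ y,\ z,\ x+y,\ x+z,\ y+z,\ x+y+z.
\]
The sum $x+y+z$ is strictly larger than every other sum because $x,y,z>0$, and $0$ differs from every other sum for the same reason, so these two extremes can be ignored. Any equality between two pair-sums reduces to equality of two of $x,y,z$, which is excluded. So the only potential coincidences are between a singleton and a pair, namely $x+y=z$, $x+z=y$, and $y+z=x$.

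Next, I would rewrite the latter two as $z=y-x$ and $z=x-y$, which together say exactly $z=|x-y|$. Combined with the first condition, $v$ is an $AR$-vertex (given two labels $x,y$) precisely when $x+y\ne z$ and $|x-y|\ne z$. This is the iff statement of the first part of the lemma.

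For the moreover clause, I would invoke the ordering $x<y<z$ to note that $y+z>z>x$ and $x+z>z>y$, so the equalities $y+z=x$ and $x+z=y$ are automatically impossible. Hence the only constraint left to verify is $x+y\ne z$, which is the claimed sufficient condition. I do not anticipate a genuine obstacle here; the proof is a careful enumeration, and the only small care needed is to make sure the equivalence $\{x+z\ne y,\ y+z\ne x\}\Leftrightarrow |x-y|\ne z$ is argued without assuming which of $x,y$ is larger.
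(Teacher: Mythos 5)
Your proof is correct. The paper itself gives no proof of this lemma --- it is quoted from the reference \cite{Apa} --- so there is nothing to compare against; your enumeration of the eight subset sums, elimination of the trivial coincidences (two pair-sums, the extremes $0$ and $x+y+z$, and a singleton against a pair containing it, all of which fail because the labels are positive and distinct), and reduction to the three equalities $x+y=z$, $x+z=y$, $y+z=x$ is exactly the natural argument, and your observation that the latter two together are equivalent to $z=\lvert x-y\rvert$ without assuming an order on $x,y$ is the right way to handle the absolute value. The moreover clause follows as you say, since $x<y<z$ rules out $x+z=y$ and $y+z=x$ outright.
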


\begin{theorem}
    The $AR$-index of the wheel graph on $n$ vertices, $ARI$ $(W_n ) = ES(n-1)$, for $n > 5$, where $W_n$ is the cycle $C_{n-1}$ together with a vertex adjacent to all the vertices of $C_{n-1}$. 
\end{theorem}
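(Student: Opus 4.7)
The lower bound $ARI(W_n) \geq ES(n-1)$ is immediate from Theorem~\ref{Delta}, since the hub of $W_n$ is a vertex of degree $n-1$. All the work lies in producing the matching upper bound, that is, exhibiting an $AR$-labeling of $W_n$ using labels drawn from $\{1, 2, \ldots, ES(n-1)\}$.

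The plan is to fix, once and for all, an $(n-1)$-element set $S = \{s_1 < s_2 < \cdots < s_{n-1}\} \subseteq \{1, \ldots, ES(n-1)\}$ with distinct subset sums and $\max(S) = ES(n-1)$ (such $S$ exists by the very definition of the $ES$-sequence), and to label the $n-1$ spokes of $W_n$ bijectively by $S$. This single choice already makes the hub an $AR$-vertex, so the remaining task reduces to labeling the $n-1$ cycle edges using the $ES(n-1)-(n-1)$ leftover values in $\{1,\ldots,ES(n-1)\}\setminus S$ so that each cycle vertex becomes an $AR$-vertex.

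Every cycle vertex has degree three, incident to one spoke and two cycle edges, so by Lemma~\ref{3lab} it suffices to guarantee that the three incident labels $x<y<z$ satisfy $x+y\neq z$. I would assign the cycle edge labels $a_1,a_2,\ldots,a_{n-1}$ one at a time around $C_{n-1}$, greedily choosing from $\{1,\ldots,ES(n-1)\}\setminus S$. Once the spoke label $t_i$ at the shared vertex $v_i$ and the preceding cycle label $a_{i-1}$ are both fixed, Lemma~\ref{3lab} rules out at most the two candidate values $t_i+a_{i-1}$ and $|t_i-a_{i-1}|$ for $a_i$; together with the previously used labels, only $O(n)$ values are blocked in total. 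Since $ES(n-1)\geq 2^{n-1}/(4\sqrt{n-1})$ by the Erdős--Moser bound, the available pool dwarfs the forbidden set for $n$ even moderately large, so the greedy procedure succeeds, including the final closing step in which both endpoints of $a_{n-1}$ impose conditions simultaneously.

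The delicate point will be the small-$n$ boundary, specifically $n=6$ (where $ES(5)=13$ and the non-spoke pool contains only $8$ labels for $5$ cycle edges) and possibly $n=7$. For these, I would abandon the asymptotic greedy estimate and instead produce an explicit labeling: take $S$ to be one of the extremal $5$- or $6$-element distinct-subset-sum sets from Lunnon's tables and write down a concrete cyclic assignment of the remaining labels, verifying the inequality $x+y\neq z$ at each of the cycle vertices by hand. This finite case analysis, rather than the Erdős--Moser counting, is where the genuine work will lie.
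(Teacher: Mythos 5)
Your proposal follows essentially the same route as the paper: the spokes are labeled by an extremal distinct-subset-sum set of size $n-1$, the cycle vertices all have degree three so Lemma~\ref{3lab} reduces everything to avoiding finitely many sums and differences, a counting argument shows the leftover pool in $\{1,\dots,ES(n-1)\}$ is large enough, and the small cases are settled by explicit labelings (the paper gives pictures for $W_6$ and $W_7$). The only real difference is cosmetic: the paper first labels a maximum independent set of cycle edges and then counts forbidden values for the rest, while you go greedily around the cycle; both are the same counting idea.

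One small quantitative caveat: the Erd\H{o}s--Moser bound $ES(j)\geq 2^{j}/(4\sqrt{j})$ is too weak at $j=7$, where it gives only about $12$, while your worst greedy step (the closing edge of $C_7$ in $W_8$) forbids up to $7+6+4=17$ values. So $n=8$ is not covered by the asymptotic estimate you invoke, even though you only flag $n=6$ and $n=7$ for explicit treatment. The fix is immediate --- use the known value $ES(7)=44$ (and $ES(8)=84$), or the elementary bound $ES(j)\geq (2^{j}-1)/j$ for $j\geq 9$; this is exactly how the paper justifies its inequality $ES(j)>6j$ for $j\geq 7$.
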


\begin{proof}
    Since $\Delta(W_n ) = n - 1$, we have $ARI (W_n ) \geq ES(n-1)$. From the $AR$-labeling of $W_6$ and $W_7$ in Figure \ref{W6 and W7}, $ARI (W_6 ) \leq ES(5)$ and $ARI(W_7 ) \leq ES(6)$. Hence, the result is true for $n=6,7$.\par
\begin{figure}[ht]
    \centering
    \includegraphics[scale=0.7]{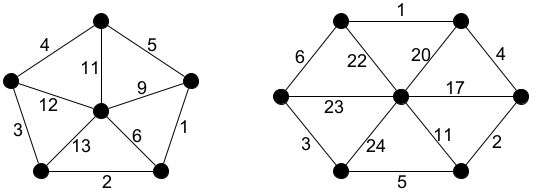}
    \caption{$AR$-Labeling of $W_6$ and $W_7$}
    \label{W6 and W7}
\end{figure}

For $n \geq 8$, label the $n-1$ edges incident on the central vertex using the set of $(n-1)$ elements having distinct subset sums (the set corresponding to $ES(n-1)$). Label a
maximum independent set of edges in the external cycle using some numbers less than $ES(n-1)$) which are not yet used.\par

\noindent{\bf Case 1:} External cycle is even, of length say $j$.\\
From Lemma \ref{3lab}, corresponding to each edge that is left to be labeled, there exists at
most four labels that cannot be used (two each for each vertex). So, in total, there exist $2j$
labels that cannot be used. We have already used up $j + \frac{j}{2}$ labels. If we combine all these
labels that cannot be used for labeling the remaining edges (at least one of them), its still
less than $4j$. But $ES(j) > 6j$ for $j \geq 7$, so there are more than $2j$ labels still left that can be used to label
the remaining $\frac{j}{2}$ edges.\par

\noindent{\bf Case 2:} External cycle is odd, of length $l,~l = 2k + 1$, for some $k \in \mathbb{N}$.\\
After labeling a maximum set of $k$ independent edges, there exists a pair of adjacent edges not yet labeled. Label one of them using some number that is neither used in the labeling yet nor the sum of weights of edges already incident on one of the vertices it falls on. We have used
up $l+k+1$ edge labels and by Lemma \ref{3lab}, the maximum possible numbers that cannot be used in our labeling is $4k$. Their sum amounts to $7k+2$ which is less than $4l$. But, $ES(l) > 6l$ for $l \geq 7$, so there are more than $2l = 4k+2$ labels still left that can be used
to label the remaining edges which are $k$ in number.\\
Combining the restriction forced by $\Delta(W_n )$ and the above constructions, we have for $n >
5$, $ARI (W_n ) = ES(n-1)$.
\end{proof}
\begin{coro*}
The only $AR$-wheels are $W_4$ and $W_5$.    
\end{coro*}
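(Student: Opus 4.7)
The plan is to read the corollary off the preceding theorem together with the identity $m(W_n)=2(n-1)$. Since $W_n$ is an $AR$-graph exactly when $ARI(W_n)=m(W_n)$, the theorem tells me that for $n>5$ the question reduces to whether $ES(n-1)=2(n-1)$. I would start by ruling out the two borderline cases $n=6,7$ using the known values $ES(5)=13>10=m(W_6)$ and $ES(6)=24>12=m(W_7)$. For $n\geq 8$ I would invoke the elementary Erd\H{o}s inequality $ES(k)\geq (2^k-1)/k$ that was already used in the $K_n$ theorem: the exponential growth of the left-hand side outruns the linear $2(n-1)$ from some small threshold onwards, and a quick comparison confirms this holds for all $n\geq 8$. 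This disposes of every wheel with $n\geq 6$.

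It remains to handle the cases $n=4,5$ that lie outside the range of the theorem. Here $W_4=K_4$, which has already been identified as an $AR$-graph in the complete-graph theorem, so no new work is needed. For $W_5$ I would exhibit an explicit $AR$-labeling using the labels $\{1,2,\dots,8\}$: the central vertex has degree four and needs a $4$-set of edge labels with distinct subset sums and maximum at most $7=ES(4)$, and the essentially unique such set is $\{3,5,6,7\}$; the remaining labels $\{1,2,4,8\}$ are then to be placed on the rim $C_4$. Each rim vertex has degree three, so Lemma \ref{3lab} reduces the $AR$-condition at that vertex to the single inequality $x+y\neq z$ on its three incident labels.

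The only real bookkeeping is to pick a cyclic arrangement of $\{1,2,4,8\}$ around the rim that is compatible, at every rim vertex, with the spoke label it faces; since there are only a handful of rotations and reflections to check, this is done by inspection rather than by a clever structural argument. Combining the explicit $W_5$ labeling, the already-established $W_4=K_4$, and the inequality $ES(n-1)>2(n-1)$ for all $n\geq 6$ then yields the claimed classification.
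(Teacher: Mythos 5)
Your proposal is correct and follows essentially the same route as the paper: $W_4=K_4$ and an explicit labeling for $W_5$ handle the positive cases, while $m(W_n)=2(n-1)<ES(n-1)\leq ARI(W_n)$ kills all $n\geq 6$; you merely make the inequality explicit (known values for $n=6,7$, the Erd\H{o}s bound $ES(k)\geq(2^k-1)/k$ for $n\geq 8$) where the paper simply asserts it. One cosmetic remark: the hub's label set need not have maximum at most $7$ (only at most $m(W_5)=8$), but since you are only exhibiting one valid labeling, taking $\{3,5,6,7\}$ on the spokes and $\{1,2,4,8\}$ on the rim works and the inspection you describe does succeed.
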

\begin{proof}
    We know that $W_4$ is $K_4$ itself, which we have already shown as an $AR$-graph and $W_5$ is shown to be an $AR$-graph in Figure \ref{W5}.
\begin{figure}[ht]
    \centering
    \includegraphics[scale=0.6]{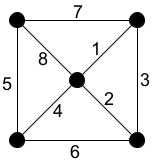}
    \caption{$AR$-Labeling of $W_5$}
    \label{W5}
\end{figure}
For $n > 5$, $W_n$ are not $AR$-graphs since $m(W_n ) = 2(n - 1) < ES(n-1)$.
\end{proof}

\section{Concluding Remarks}
The $AR$-index of a graph $G$ was defined in \cite{Apa} with the intention of measuring how close a graph is from being an $AR$-graph. In this paper, a lower bound for the $AR$ -index of $G$ was obtained by taking into account the maximum degree of $G$ and using a sequence which we call the $ES$-sequence. This bound was used to prove that there are only finitely many $AR$-graphs in some graph classes. This paper outlines the idea of $AR$-index of graphs by giving some bounds that prove handy in identifying some non-$AR$-graphs.\par
The exact values of $AR$-index of majority of the basic graph classes are yet to be determined. The upper bound for $AR$-index is very crude for an arbitrary graph, though two graphs ($K_2$ and $P_3$) do attain it. In fact, though the bounds for $AR$-index are sharp in the general setting, while analyzing particular graphs/classes of graphs, they could be improved drastically. Since $AR$-labeling in general and $AR$-index in particular require elements from $ES$-sequence for labeling as well as improving bounds, it is a necessity moving forward that more elements of $ES$-sequence be computed possibly using some combinatorial and algorithmic techniques. Another question that could possibly arise from the framework of this paper is whether there exists a Ramsey-like result for Erdős subset sum conjecture.\par

\noindent \textbf{Acknowledgment:} The first author is supported by the Junior Research Fellowship (09/0239(17181)/2023-EMR-I) of CSIR (Council of Scientific and Industrial Research, India).

\end{document}